\newtheorem{corollary}{Corollary}
\newtheorem{lemma}{Lemma}
\newtheorem{proposition}{Proposition}
\newtheorem{theorem}{Theorem}
\theoremstyle{definition}
\newtheorem{definition}{Definition}
\newtheorem{example}{Example}
\theoremstyle{remark}
\newtheorem{remark}{Remark}
\DeclareMathOperator{\id}{id}
\DeclareMathOperator{\Supp}{Supp}
\begin{document}

\title{Simplicity of non-associative skew Laurent polynomial rings}
\author{Per B\"ack}
\address[Per B\"ack]{Division of Mathematics and Physics, M\"alar\-dalen  University,  Box  883,  SE-721  23  V\"aster\r{a}s, Sweden}
\email[corresponding author]{per.back@mdu.se}

\author{Johan Richter}
\address[Johan Richter]{Department of Mathematics and Natural Sciences, Blekinge Institute of Technology, SE-371 79 Karlskrona, Sweden}
\email{johan.richter@bth.se}

\subjclass[2020]{16S35, 17A99, 17D99}
\keywords{non-associative skew Laurent polynomial rings}

\begin{abstract}
We introduce non-associative skew Laurent polynomial rings and characterize when they are simple. Thereby, we generalize results by Jordan, Voskoglou, and Nystedt and Öinert.
\end{abstract}

\maketitle

\section{Introduction}
In 1903, Hilbert \cite{Hil03} introduced a ring of formal Laurent series with a \emph{skewed} or \emph{twisted} multiplication to show the existence of a non-commutative ordered division ring. Nowadays, the rings and their corresponding multiplication are thus referred to as \emph{skew} or \emph{twisted Laurent series rings} and \emph{Hilbert's twist} \cite{Lam01}, respectively. Thirty years later, Ore~\cite{Ore33} initiated the study of what he called ``non-commutative polynomial rings'', today more commonly known as \emph{Ore extensions}. Since their introductions, skew Laurent series rings, Ore extensions, and the closely related \emph{skew Laurent polynomial rings} have been studied quite extensively (see e.g. \cite{GW04, Lam01, MR01} for comprehensive introductions). Moreover, some years ago, Nystedt, {\"O}inert, and Richter \cite{NOR18} introduced a non-associative generalization of Ore extensions.

In this article, we introduce a non-associative generalization of skew Laurent polynomial rings and characterize when such rings are simple. Thereby, we extend results on simplicity of skew Laurent polynomial rings by Jordan \cite{Jor84} (see \autoref{thm:jordan-simplicity}) and Voskoglou \cite{Vos87} (see \autoref{thm:voskoglou-several-simplicity}) to the non-associative setting (\autoref{thm:simplicity} and \autoref{thm:several-simplicity}, respectively). Moreover, our construction of \emph{non-associative skew Laurent polynomial rings} is a generalization of that by Nystedt and Öinert \cite{NO20}, and we obtain a generalization (\autoref{thm:several-simplicity}) of a simplicity result by them (see \autoref{thm:nystedt-oinert-several-simplicity}).

The article is organized as follows:

In \autoref{sec:prel}, we provide conventions and preliminaries from non-associative ring theory (\autoref{subsec:non-assoc-ring-theory}). We also recall some results about graded non-associative rings (\autoref{subsec:graded-ring-theory}) and remind what skew Laurent polynomial rings are\\ (\autoref{subsec:skew-laurent-ore}).

In \autoref{sec:non-assoc-laurent}, we introduce non-associative skew Laurent polynomial rings and examples thereof.

In \autoref{sec:simplicity}, we characterize when non-associative skew Laurent polynomial rings are simple. We then apply our results to the examples introduced in \autoref{sec:non-assoc-laurent}.

\section{Preliminaries}\label{sec:prel}
\subsection{Non-associative ring theory}\label{subsec:non-assoc-ring-theory}
We denote by $\mathbb{N}$ the natural numbers, including zero. By a \emph{non-associative ring}, we mean a unital ring which is not necessarily associative. If $R$ is a non-associative ring, by a \emph{left $R$-module}, we mean an additive group $M$ equipped with a biadditive map $R\times M\to M$, $(r,m)\mapsto rm$ for any $r\in R$ and $m\in M$. A subset $B$ of $M$ is a basis if for any $m\in M$, there are unique $r_b\in R$ for $b\in B$, such that $r_b=0$ for all but finitely many $b\in B$, and $m=\sum_{b\in B}r_bb$. A left $R$-module that has a basis is called \emph{free}.

For a non-associative ring $R$, the \emph{commutator} is the function $[\cdot,\cdot]\colon R\times R\to R$ defined by $[r,s]=rs-sr$ for any $r,s\in R$. The \emph{commuter} of $R$, denoted by $C(R)$, is the additive subgroup $\{r\in R\colon [r,s]=0 \text{ for all } s\in R\}$ of $R$. The \emph{associator} is the function $(\cdot,\cdot,\cdot)\colon R\times R\times R\to R$ defined by $(r,s,t)=(rs)t-r(st)$ for all $r,s,t\in R$. Using the associator we define three sets: the \emph{left nucleus} of $R$, $N_l(R)\colonequals\{r\in R\colon (r,s,t)=0 \text{ for all } s,t\in R\}$, the \emph{middle nucleus} of $R$, $N_m(R)\colonequals\{s\in R\colon (r,s,t)=0 \text{ for all } r,t\in R\}$, and the \emph{right nucleus} of $R$, $N_r(R)\colonequals\{t\in R\colon (r,s,t)=0 \text{ for all } r,s\in R\}$. From the so-called associator identity 
\begin{equation*}
u(r,s,t)+(u,r,s)t+(u,rs,t)=(ur,s,t)+(u,r,st)
\end{equation*}
which holds for all $r,s,t,u\in R$, it follows that $N_l(R)$, $N_m(R)$, and $N_r(R)$ are all associative subrings of $R$. We also define the \emph{nucleus} of $R$, $N(R)\colonequals N_l(R)\cap N_m(R)\cap N_r(R)$, and the \emph{center} of $R$, $Z(R)\colonequals C(R)\cap N(R)$. 

The next two propositions are standard results in non-associative ring theory (see e.g. the proofs of \cite[Proposition 2.1 and Proposition 2.3]{NOR18}).
\begin{proposition}\label{prop:center}
Let $R$ be a non-associative ring. Then the following equalities hold:
\begin{enumerate}[label=(\roman*)]	
\item $Z(R)=C(R)\cap N_l(R)\cap N_m(R)$;\label{it:commuter-left-middle} 
\item $Z(R)=C(R)\cap N_l(R)\cap N_r(R)$;\label{it:commuter-left-right}
\item $Z(R)=C(R)\cap N_m(R)\cap N_r(R)$.\label{it:commuter-middle-right} 
\end{enumerate}
\end{proposition}

\begin{proposition}\label{prop:center-is-field}
If $R$ is a simple non-associative ring, then $Z(R)$ is a field.    
\end{proposition}

Let $R$ be a non-associative ring. Take $u\in R$. Recall that $u$ is said to be left (right) invertible if there is $v\in R$ ($w\in R$) such that $vu=1$ ($uw=1$); in that case $v$ (or $w$) is called a left (or right) inverse of $u$. We let $R^\times$ denote the set of elements of $R$ that are both left and right invertible.

\begin{remark}\label{rem:unique-inverse}
Suppose $u\in N_m(R)\cap R^\times$. It is easy to show that $u$ has a unique left inverse $v$, $u$ has a unique right inverse $w$, and $v=w$. We let $u^{-1}$ denote the element $v=w$.
\end{remark}

The following small result should be known. However, we have not been able to find a reference, and so we provide a proof of it.

\begin{lemma}\label{lem:nuclei}
Let $R$ be a non-associative ring and let $u\in N_m(R)\cap R^\times$. Then the following assertions hold:
\begin{enumerate}[label=(\roman*)]
	\item If $u\in N_l(R)$, then $u^{-1}\in N_l(R)$;\label{it:lem:nuclei1}
	\item If $u\in N_r(R)$, then $u^{-1}\in N_r(R)$;\label{it:lem:nuclei2}
    \item If $u\in N_r(R)$ and $\left(u,u^{-1},R\right)=\{0\}$, then $u^{-1}\in N_m(R)\cap N_r(R)$.\label{it:lem:nuclei3}
\end{enumerate}
\end{lemma}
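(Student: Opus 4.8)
The plan is to reduce all three assertions to a single associator identity and then to cancel the invertible factor using the appropriate nuclear hypothesis. The main tool I would use is the Teichmüller identity, valid in every non-associative ring: for all $a,b,c,d\in R$,
\[
(ab,c,d)-(a,bc,d)+(a,b,cd)=a(b,c,d)+(a,b,c)d.
\]
This is a purely formal consequence of the definition $(r,s,t)=(rs)t-r(st)$ and is verified by expanding both sides. I would state it first, since each part of the lemma then follows from one judicious substitution together with the fact (noted in the excerpt) that $1\in N_l(R)\cap N_m(R)\cap N_r(R)$.

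For \ref{it:lem:nuclei1}, I would substitute $(a,b,c,d)=(x,x^{-1},s,t)$. Since $xx^{-1}=1$, the leftmost term becomes $(1,s,t)=0$, while $x\in N_l(R)$ annihilates $(x,x^{-1}s,t)$, $(x,x^{-1},st)$, and $(x,x^{-1},s)t$; the identity thus collapses to $x\,(x^{-1},s,t)=0$. The only real obstacle is that an invertible left factor cannot be cancelled for free in a non-associative ring; here I would invoke $x\in N_m(R)$, which yields $(x^{-1},x,y)=0$ for $y=(x^{-1},s,t)$ and hence $0=x^{-1}\cdot 0=x^{-1}(xy)=(x^{-1}x)y=y$, so $(x^{-1},s,t)=0$ for all $s,t$, i.e. $x^{-1}\in N_l(R)$. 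Assertion \ref{it:lem:nuclei3} is entirely symmetric: substituting $(a,b,c,d)=(s,t,x^{-1},x)$ and using $x\in N_r(R)$ together with $(s,t,1)=0$ collapses the identity to $(s,t,x^{-1})\,x=0$; writing $z=(s,t,x^{-1})$ and using $x\in N_m(R)$ to get $(z,x,x^{-1})=0$, I cancel the invertible right factor via $0=0\cdot x^{-1}=(zx)x^{-1}=z(xx^{-1})=z$, giving $x^{-1}\in N_r(R)$.

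For \ref{it:lem:nuclei2}, where $x\in N(R)=N_l(R)\cap N_m(R)\cap N_r(R)$, I would substitute $(a,b,c,d)=(s,x,x^{-1},t)$. Here $(s,xx^{-1},t)=(s,1,t)=0$, while $x\in N_m(R)$ kills $(s,x,x^{-1}t)$ and $(s,x,x^{-1})t$, and $x\in N_l(R)$ kills $s\,(x,x^{-1},t)$; the identity reduces to $(sx,x^{-1},t)=0$ for all $s,t$. The final step, and the one that genuinely requires all three nuclei, is to upgrade this to $(u,x^{-1},t)=0$ for every $u\in R$: since $x\in N_r(R)$ one has $(ux^{-1})x=u(x^{-1}x)=u$, so the map $s\mapsto sx$ is surjective; letting $s$ range over $R$ then gives $x^{-1}\in N_m(R)$.

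Throughout, the crux — the step I expect to be the only genuine difficulty — is that cancellation of the invertible element $x$ is not automatic in the non-associative setting, and each cancellation must be powered by a specific nucleus: $N_m(R)$ for the two-sided cancellations in \ref{it:lem:nuclei1} and \ref{it:lem:nuclei3}, and $N_r(R)$ (respectively the symmetric $N_l(R)$) for the surjectivity argument in \ref{it:lem:nuclei2}. This bookkeeping also makes transparent why the hypotheses in each part are exactly as stated.
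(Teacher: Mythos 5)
Your proof is correct, and all three cancellation steps are powered by exactly the right hypotheses: the Teichm\"uller identity $(ab,c,d)-(a,bc,d)+(a,b,cd)=a(b,c,d)+(a,b,c)d$ does hold by direct expansion, your substitutions $(x,x^{-1},s,t)$, $(s,x,x^{-1},t)$, $(s,t,x^{-1},x)$ kill the correct terms, the cancellations of $x$ in (i) and (iii) legitimately use $x\in N_m(R)$, and the surjectivity of $s\mapsto sx$ in (ii) legitimately uses $x\in N_r(R)$ via $(ux^{-1})x=u(x^{-1}x)=u$. This is, however, a genuinely different route from the paper's. The paper proves each part by a single bare-hands chain of equalities that inserts $1=xx^{-1}$ or $1=x^{-1}x$ at a strategic spot and reassociates step by step; for instance, for (i) it computes $x^{-1}(rs)=x^{-1}\bigl(\bigl(\bigl(xx^{-1}\bigr)r\bigr)s\bigr)=\cdots=\bigl(x^{-1}r\bigr)s$, using $x\in N_l(R)$ twice and $x\in N_m(R)$ once, with no auxiliary identity and no cancellation lemma. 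Conceptually the two arguments are the same manipulation --- ``insert $1$ and reassociate'' --- but packaged differently: the paper's version is shorter and fully self-contained, each part being one five-term display, whereas yours front-loads a universal identity and then makes the bookkeeping transparent, isolating the cancellation of the invertible factor as the single non-formal step and explaining why the hypotheses of each part are what they are (a point the paper leaves implicit in its computations). Your approach also scales better to related statements about nuclei, at the cost of first verifying the Teichm\"uller identity, which the paper avoids entirely.
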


\begin{proof}
Let $r,s\in R$ and $u\in N_m(R)\cap R^\times$. By \autoref{rem:unique-inverse}, $u$ has a unique two-sided inverse $u^{-1}$, so the statement makes sense. Now we have the following:  

\ref{it:lem:nuclei1}: Let $u\in N_l(R)$. Then
\begin{align*}
u^{-1}(rs)&=u^{-1}\left(\left(\left(uu^{-1}\right)r\right)s\right)=u^{-1}\left(\left(u\left(u^{-1}r\right)\right)s\right)=u^{-1}\left(u\left(\left(u^{-1}r\right)s\right)\right)\\
&=\left(u^{-1}u\right)\left(\left(u^{-1}r\right)s\right)=\left(u^{-1}r\right)s.
\end{align*}

\ref{it:lem:nuclei2}: Let $u\in N_r(R)$. Then
\begin{align*}
r\left(su^{-1}\right)&=\left(r\left(su^{-1}\right)\right)\left(uu^{-1}\right)=\left(\left(r\left(su^{-1}\right)\right)u\right)u^{-1}=\left(r\left(\left(su^{-1}\right)u\right)\right)u^{-1}\\
&=\left(r\left(s\left(u^{-1}u\right)\right)\right)u^{-1}=(rs)u^{-1}.
\end{align*}

\ref{it:lem:nuclei3}: Let $u\in N_r(R)$ and $(u,u^{-1},R)=\{0\}$. By \ref{it:lem:nuclei2}, $u^{-1}\in N_r(R)$, and
\begin{align*}
r\left(u^{-1}s\right)&=\left(r\left(u^{-1}u\right)\right)\left(u^{-1}s\right)=\left(\left(ru^{-1}\right)u\right)\left(u^{-1}s\right)=\left(ru^{-1}\right)\left(u\left(u^{-1}s\right)\right)\\
&=\left(ru^{-1}\right)\left(\left(uu^{-1}\right)s\right)=\left(ru^{-1}\right)s.\qedhere
\end{align*}
\end{proof}
\noindent For a general introduction to non-associative algebra, see e.g. Schafer's book \cite{Sch66}.

\subsection{Graded non-associative rings}\label{subsec:graded-ring-theory}
In \cite{NO20}, Nystedt and {\"O}inert study group-graded non-associative rings. Recall that a non-associative ring $R$ is said to be \emph{graded by a group $G$}, or \emph{$G$-graded}, if there is a collection of additive subgroups $\{R_g\}_{g\in G}$ of $R$, called \emph{homogenous components}, such that $R=\bigoplus_{g\in G} R_g$ and $R_gR_h \subseteq R_{gh}$ hold for all $g,h\in G$. An ideal $I$ of $R$ is called \emph{graded} if $I=\bigoplus_{g\in G} I\cap R_g$. The ring $R$ is said to be \emph{graded simple} if the only graded ideals of $R$ are $\{0\}$ and $R$. The group $G$ is called \emph{hypercentral} if every non-trivial factor group of $G$ has a non-trivial center. In particular, all abelian groups are hypercentral.

With the terminology introduced above, the authors then prove the following theorem: 
\begin{theorem}[{\cite[Theorem 4]{NO20}}]\label{thm:nystedt-oinert-hypercentral}
    If a non-associative ring is graded by a hypercentral group, then the ring is simple if and only if it is graded simple and the center of the ring is a field. 
\end{theorem}

If $R$ is a $G$-graded non-associative ring, then we define $\Supp(R)\colonequals\{g\in G\colon R_g\neq\{0\}\}$. The ring $R$ is said to be \emph{faithfully $G$-graded} if for any $g,h\in\Supp(R)$ and non-zero $r\in R_g$, we have $rR_h\neq \{0\}\neq R_hr$. Denoting the identity element of $G$ by $e$, the authors then use \autoref{thm:nystedt-oinert-hypercentral} to prove the following result:  
\begin{corollary}[{\cite[Corollary 32]{NO20}}]\label{cor:NO}
    If $R$ is a faithfully $G$-graded ring with $\Supp(R)=G$, where $G$ is a torsion-free hypercentral group, then $R$ is simple if and only if $R$ is graded simple and $Z(R)\subseteq R_e$ holds. 
\end{corollary}

Let $\sigma_1,\ldots,\sigma_n$ be automorphisms of $R$. We say that an ideal $I$ of $R$ is a \emph{$(\sigma_1,\ldots,\sigma_n)$-ideal} if $\sigma_i(I)=I$ holds for any $i\in\{1,\ldots,n\}$. Moreover, $R$ is said to be \emph{$(\sigma_1,\ldots,\sigma_n)$-simple} if $\{0\}$ and $I$ are the only $(\sigma_1,\ldots,\sigma_n)$-ideals of $R$. Note that the definition of $(\sigma_1, \ldots, \sigma_n)$-simplicity in \cite{NO20} contains a mistake and should be the same as the one just given, which has also been confirmed by its authors. In loc.~cit., non-associative skew Laurent polynomial rings are then introduced as a class of \emph{non-associative skew group rings}. With the corrected definition of $(\sigma_1, \ldots, \sigma_n)$-simplicity above, the authors then prove the following theorem:

\begin{theorem}[{\cite[Theorem 52]{NO20}}]\label{thm:nystedt-oinert-several-simplicity}
   Let $R$ be a non-associative ring with pairwise commuting automorphisms $\sigma_1,\ldots,\sigma_n$. Then $R[X_1^\pm,\ldots,X_n^\pm;\sigma_1,\ldots,\sigma_n]$ is simple if and only if $R$ is $(\sigma_1,\ldots,\sigma_n)$-simple and there do not exist $u\in N(R)^\times$ and a non-zero $(m_1,\ldots,m_n)\in\mathbb{Z}^n$, such that for all $r\in R$ and $i\in\{1,\ldots,n\}$, the following equalities hold:
\begin{enumerate}[label=(\roman*)]
\item $(\sigma_1^{m_1}\circ \cdots \circ \sigma_n^{m_n})(r)=u^{-1}ru$;
\item $\sigma_i(u)=u$.
\end{enumerate}	
\end{theorem}

In \autoref{sec:non-assoc-laurent}, we will generalize the above construction of non-associative skew Laurent polynomial rings, and in \autoref{sec:simplicity}, prove that a generalization of \autoref{thm:nystedt-oinert-several-simplicity} holds for them.

\subsection{Skew Laurent polynomial rings}\label{subsec:skew-laurent-ore}
Let us recall the definition of (associative) skew Laurent polynomial rings.

\begin{definition}[Skew Laurent polynomial ring]
Let $S$ be a ring, $R$ a subring of $S$ containing the multiplicative identity element $1$, and $x\in S^\times$. Then $S$ is called a \emph{skew Laurent polynomial ring} of $R$ if the following axioms hold:
\begin{enumerate}[label=(S\arabic*)]
\item $S$ is a free left $R$-module with basis $\{1,x,x^{-1},x^2,x^{-2},\ldots\}$;\label{it:s1}	
\item $xR=Rx$;\label{it:s2}
\item $S$ is associative.\label{it:s3}
\end{enumerate}
\end{definition}
To construct skew Laurent polynomial rings, one considers \emph{generalized Laurent polynomial rings} $R[X^\pm;\sigma]$ where $R$ is an associative ring and $\sigma\colon R\to R$ is an automorphism. As an additive group, $R[X^\pm;\sigma]$ equals the ordinary Laurent polynomial ring $R[X^\pm]$. The multiplication in $R[X^\pm;\sigma]$ is then defined by the biadditive extension of the relations
\begin{equation}
\left(rX^m\right)\left(sX^n\right)=(r\sigma^m(s))X^{m+n}\label{eq:laurent-mult}
\end{equation}	
for any $r,s\in R$ and $m,n\in\mathbb{Z}$. In particular, the product \eqref{eq:laurent-mult} makes $R[X^\pm;\sigma]$ a $\mathbb{Z}$-graded ring and a Laurent polynomial ring of $R$ with $x=X$ (see e.g. the proof of \autoref{prop:laurent-generalized}). Moreover, every skew Laurent polynomial ring of $R$ is isomorphic to a generalized Laurent polynomial ring $R[X^\pm;\sigma]$ (see e.g. the proof of \autoref{prop:generalized-laurent}).

The next theorem, due to Jordan \cite{Jor84}, characterizes when generalized Laurent polynomial rings are simple:

\begin{theorem}[{\cite[Theorem O]{Jor84}}]\label{thm:jordan-simplicity}
Let $R$ be an associative ring with a ring automorphism $\sigma$. Then $R[X^\pm;\sigma]$ is simple if and only if $R$ is $\sigma$-simple and there do not exist $u\in R^\times$ and a non-zero $n\in\mathbb{Z}$, such that for all $r\in R$, the following equalities hold:
\begin{enumerate}[label=(\roman*)]
\item $\sigma^n(r)=u^{-1}ru$;
\item $\sigma(u)=u$.
\end{enumerate}	
\end{theorem}

If $\sigma_1,\ldots,\sigma_n$ are pairwise commuting automorphisms, then we may construct an iterated generalized Laurent polynomial ring of $R$ as follows (see also Exercise 1W in \cite{GW04}). First, we set $S_1\colonequals R[X_1^\pm;\sigma_1]$. Then $\sigma_2$ extends to a ring automorphism $\widehat{\sigma}_2$ on $S_1$, defined by $\widehat{\sigma}_2(rX_1^m)=\sigma_2(r)X_1^m$ for any $m\in\mathbb{Z}$. Next, we set $S_2\colonequals S_1[X_2^\pm;\widehat{\sigma}_2]$. Once $S_i$ has been constructed for some $i<n$, we define $S_{i+1}\colonequals S_i[X_{i+1}^\pm;\widehat{\sigma}_{i+1}]$ where $\widehat{\sigma}_{i+1}$ is the ring automorphism on $S_i$ defined by $\widehat{\sigma}_{i+1}(rX_1^{m_1}\cdots X_n^{m_n})=\sigma_{i+1}(r)X_1^{m_1}\cdots X_n^{m_n}$. We may now construct an iterated generalized Laurent polynomial ring $R[X_1^\pm;\sigma_1]\cdots[X_n^\pm;\widehat{\sigma}_n]$, which we denote by $R[X_1^\pm,\ldots,X_n^\pm;\sigma_1,\ldots,\sigma_n]$. 

Voskoglou \cite{Vos87} has generalized \autoref{thm:jordan-simplicity} and characterized when iterated generalized Laurent polynomial rings are simple:

\begin{theorem}[{\cite[Corollary 3.7]{Vos87}}]\label{thm:voskoglou-several-simplicity}
   Let $R$ be an associative ring with pairwise commuting automorphisms $\sigma_1,\ldots,\sigma_n$. Then $R[X_1^\pm,\ldots,X_n^\pm;\sigma_1,\ldots,\sigma_n]$ is simple if and only if $R$ is $(\sigma_1,\ldots,\sigma_n)$-simple and there do not exist $u\in R^\times$ and a non-zero $(m_1,\ldots,m_n)\in\mathbb{Z}^n$, such that for all $r\in R$ and $i\in\{1,\ldots, n\}$, the following equalities hold:
\begin{enumerate}[label=(\roman*)]
\item $(\sigma_1^{m_1}\circ \cdots \circ \sigma_n^{m_n})(r)=u^{-1}ru$;
\item $\sigma_i(u)=u$.
\end{enumerate}	
\end{theorem}
We note that \autoref{thm:voskoglou-several-simplicity} is the special case of \autoref{thm:nystedt-oinert-several-simplicity} when $R$ is associative.

\section{Non-associative skew Laurent polynomial rings}\label{sec:non-assoc-laurent}
We wish to define non-associative skew Laurent polynomial rings in an analogous fashion to how non-associative Ore extensions in \cite{NOR18} are defined; hence we follow the same line of reasoning as in loc.~cit. First, we note that the product \eqref{eq:laurent-mult} equips the additive group $R[X^\pm;\sigma]$ of generalized Laurent polynomials over any non-associative ring $R$ with a non-associative ring structure for any additive bijection $\sigma$ on $R$ respecting $1$. In order to define non-associative skew Laurent polynomial rings, we therefore wish to adopt the axioms \ref{it:s1}, \ref{it:s2}, and \ref{it:s3} so that these rings still correspond to the above generalized Laurent polynomial rings. We suggest the following definition:

\begin{definition}[Non-associative skew Laurent polynomial ring]
Let $S$ be a non-associative ring, $R$ a subring of $S$ containing the multiplicative identity element $1$, and $x\in S^\times$. Then $S$ is called a \emph{non-associative skew Laurent polynomial ring} of $R$ if the following axioms hold:
\begin{enumerate}[label=(N\arabic*)]
\item $S$ is a free left $R$-module with basis $\{1,x,x^{-1},x^2,x^{-2},\ldots\}$;\label{it:n1}	
\item $xR=Rx$;\label{it:n2}
\item $(S,S,x)=(S,x,S)=\{0\}$.\label{it:n3}
\end{enumerate}
\end{definition}

Note that $x^{-1}$ in the above definition does indeed exist by \ref{it:n3} and \autoref{rem:unique-inverse}. Moreover, \ref{it:n3} together with \ref{it:lem:nuclei2} in \autoref{lem:nuclei} ensure that the elements $x$ and $x^{-1}$ are power associative, so that $x^m$ is well defined for any $m\in\mathbb{Z}$. 

Let $R$ be a non-associative ring. We denote by $R[X^\pm;]$ the set of formal sums $\sum_{i\in \mathbb{Z}}r_iX^i$ where $r_i\in R$ is zero for all but finitely many $i\in \mathbb{Z}$, equipped with pointwise addition. Now, let $\sigma$ be an additive bijection on $R$ respecting $1$. The \emph{generalized Laurent polynomial ring $R[X^\pm;\sigma]$} over $R$ is defined as the additive group $R[X^\pm]$ with multiplication defined by \eqref{eq:laurent-mult}. One readily verifies that this makes $R[X^\pm;\sigma]$ a $\mathbb{Z}$-graded non-associative ring.  

\begin{remark}\label{rem:laurent-ore-monoid}
If $R$ is a non-associative ring with an additive bijection $\sigma$ that respects $1$, then $R[X^\pm;\sigma]$ is a so-called \emph{Ore monoid ring} $R[G;\pi]$ as introduced in \cite{NOR19}. Here $G=\mathbb{Z}$ and $\pi=\{\pi_b^a\}_{a,b\in G}$ where $\pi_b^a=\sigma^a$ if $a=b$ and $\pi_b^a=0$ otherwise.
\end{remark}

\begin{proposition}\label{prop:Xassoc}
Let $R$ be a non-associative ring with an additive bijection $\sigma$ that respects $1$. If $S=R[X^\pm;\sigma]$, then $X^n \in N_m(S) \cap N_r(S)$ for any $n\in\mathbb{Z}$.
\end{proposition}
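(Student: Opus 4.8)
The plan is to verify the two memberships $X^n \in N_r(S)$ and $X^n \in N_m(S)$ directly from the definition of the associator, after reducing to monomial arguments. Since $(\cdot,\cdot,\cdot)$ is additive in each of its three slots and every element of $S$ is a finite $R$-linear combination of the basis monomials $X^i$, it suffices to show, for all $r,s\in R$ and all $i,j\in\mathbb{Z}$, that
\[
\left(rX^i, sX^j, X^n\right)=0 \qquad\text{and}\qquad \left(rX^i, X^n, sX^j\right)=0,
\]
where throughout I write $X^n=1\cdot X^n$. Both identities will then follow from a short expansion using the multiplication rule \eqref{eq:laurent-mult}, together with the two elementary facts that $\sigma(1)=1$ forces $\sigma^k(1)=1$ for every $k\in\mathbb{Z}$, and that the iterates of the bijection $\sigma$ compose additively, $\sigma^a\circ\sigma^b=\sigma^{a+b}$.

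For the right nucleus, I would expand both bracketings of $\left(rX^i, sX^j, X^n\right)$. In computing $\big((rX^i)(sX^j)\big)X^n$ the factor $\sigma^{i+j}(1)$ appears and collapses to $1$, while in $(rX^i)\big((sX^j)X^n\big)$ the factor $\sigma^{j}(1)$ appears and likewise collapses to $1$; both products therefore reduce to $(r\sigma^i(s))X^{i+j+n}$, so the associator vanishes. For the middle nucleus, the expansion of $\left(rX^i, X^n, sX^j\right)$ proceeds analogously: here $\sigma^i(1)=1$ is used in one bracketing and the composition law $\sigma^i\circ\sigma^n=\sigma^{i+n}$ in the other, and both products reduce to $(r\sigma^{i+n}(s))X^{i+n+j}$.

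I do not anticipate a genuine obstacle, as the computations are short and formal once the reduction to monomials is made. The only point demanding care is that $\sigma$ is merely an additive bijection respecting $1$ and is \emph{not} assumed multiplicative; what makes both associators collapse is precisely that, in each of these two configurations, $\sigma$ is only ever applied to the identity element (yielding $1$) or composed with itself as an iterate, never to a nontrivial product. This observation is also exactly what explains why the statement is confined to $N_r(S)\cap N_m(S)$ and omits the left nucleus: in evaluating $\left(X^n, rX^i, sX^j\right)$ one is forced to compare $\sigma^n\!\big(r\sigma^i(s)\big)$ with $\sigma^n(r)\sigma^{i+n}(s)$, and these agree for all $r,s$ only when $\sigma$ respects multiplication, which is not among our hypotheses.
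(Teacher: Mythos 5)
Your proof is correct, but it takes a genuinely different route from the paper, which disposes of the statement in one line: $R[X^\pm;\sigma]$ is an Ore monoid ring $R[G;\pi]$ with $G=\mathbb{Z}$ and $\pi_b^a=\sigma^a$ if $a=b$ (zero otherwise), and the claim is a special case of Proposition~3 in \cite{NOR19}, proved there once for all Ore monoid rings. Your direct verification---reducing by triadditivity of the associator to monomial arguments and expanding both bracketings of $\left(rX^i,sX^j,X^n\right)$ and $\left(rX^i,X^n,sX^j\right)$ via \eqref{eq:laurent-mult}---is complete and sound: the reduction step is legitimate because the multiplication is the biadditive extension of \eqref{eq:laurent-mult}, and the two facts you lean on are exactly right, namely $\sigma^k(1)=1$ for all $k\in\mathbb{Z}$ (bijectivity is what makes this work for negative $k$) and $\sigma^a\circ\sigma^b=\sigma^{a+b}$ for integer iterates of a bijection. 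What the paper's approach buys is economy and uniformity: the same citation covers the analogous nuclear statements in the Ore extension setting, with no computation repeated. What yours buys is self-containedness and transparency: it isolates precisely why the weak hypotheses on $\sigma$ suffice (in these two associator configurations $\sigma$ only ever hits $1$ or composes with its own iterates, never a genuine product), and your closing observation---that $\left(X^n,rX^i,sX^j\right)=0$ would force $\sigma^n\left(r\sigma^i(s)\right)=\sigma^n(r)\sigma^{n+i}(s)$, i.e.\ multiplicativity of $\sigma$---correctly anticipates \ref{it:x-left-nucleus} of \autoref{prop:S-associativity}, which characterizes $X\in N_l(S)$ by $\sigma$ being an automorphism.
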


\begin{proof}
This is a special case of \cite[Proposition 3]{NOR19}.
\end{proof}

\begin{proposition}\label{prop:laurent-generalized}
Let $R$ be a non-associative ring with an additive bijection $\sigma$ that respects $1$. Then $R[X^\pm;\sigma]$ is a non-associative skew Laurent polynomial ring of $R$ with $x=X$.
\end{proposition}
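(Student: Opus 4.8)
The plan is to verify the three axioms \ref{it:n1}, \ref{it:n2}, and \ref{it:n3} directly for $S:=R[X^\pm;\sigma]$ with $x=X$, delegating the associativity condition to \autoref{prop:Xassoc}. First I would record the structure that is already implicit in the construction: the left $R$-module action on $S$ is $r\cdot\left(\sum_i s_iX^i\right)=\sum_i (rs_i)X^i$, which coincides with the product $(rX^0)\left(\sum_i s_iX^i\right)$ since $\sigma^0=\id_R$; in particular $\{rX^0:r\in R\}$ is a subring of $S$ isomorphic to $R$ and containing $1=1X^0$. I would also check that $X=1X^1$ is invertible with $X^{-1}=1X^{-1}$: indeed $(1X^1)(1X^{-1})=(1\cdot\sigma(1))X^0=1$ because $\sigma(1)=1$, and $(1X^{-1})(1X^1)=(1\cdot\sigma^{-1}(1))X^0=1$ because $\sigma^{-1}(1)=1$, the latter following from $\sigma(1)=1$ and the injectivity of $\sigma$. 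Power associativity of $X$, needed to make $X^n$ unambiguous, is furnished by \autoref{prop:Xassoc}.

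For \ref{it:n1}, I would first show by induction that the $n$-fold product $X^n$ equals the monomial $1X^n$ for every $n\in\mathbb{Z}$. The base cases $X^0=1X^0$, $X^1=1X^1$, and $X^{-1}=1X^{-1}$ are immediate, and the inductive step follows from $(1X^m)(1X^1)=(1\cdot\sigma^m(1))X^{m+1}=1X^{m+1}$, together with the analogous computation $(1X^{-m})(1X^{-1})=(1\cdot\sigma^{-m}(1))X^{-m-1}=1X^{-m-1}$ for negative powers, using that $\sigma^k(1)=1$ for all $k\in\mathbb{Z}$. Consequently, every element $\sum_i r_iX^i\in R[X^\pm]$ equals $\sum_i r_i\cdot X^i$, expressed in terms of the set $\{X^i:i\in\mathbb{Z}\}=\{1,X,X^{-1},X^2,X^{-2},\ldots\}$; since $R[X^\pm]$ consists by definition of formal sums with uniquely determined, finitely supported coefficients $r_i\in R$, this set is a basis, so $S$ is a free left $R$-module with the required basis.

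For \ref{it:n2}, I would simply compute both products. On the one hand $Xr=(1X^1)(rX^0)=(1\cdot\sigma(r))X^1=\sigma(r)X$, so $XR=\{\sigma(r)X:r\in R\}$; on the other hand $rX=(rX^0)(1X^1)=(r\cdot 1)X^1=rX$, so $RX=\{rX:r\in R\}$. Since $\sigma$ is a bijection on $R$, we have $\{\sigma(r):r\in R\}=R$, whence $XR=RX$.

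Finally, \ref{it:n3} asserts precisely that $X\in N_r(S)\cap N_m(S)$, which is exactly the case $n=1$ of \autoref{prop:Xassoc}, so nothing further is required. The only points that demand any care are the identifications $X^{-1}=1X^{-1}$ and $X^n=1X^n$, both hinging on $\sigma$ fixing $1$; the genuinely non-associative content, namely \ref{it:n3}, is entirely absorbed into \autoref{prop:Xassoc}, and I therefore expect no real obstacle — this is why the proposition is announced as immediate.
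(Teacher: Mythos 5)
Your proposal is correct and takes essentially the same route as the paper: verify \ref{it:n1} and \ref{it:n2} by direct computation with the product rule \eqref{eq:laurent-mult} (using bijectivity of $\sigma$ and $\sigma(1)=1$), and delegate \ref{it:n3} to \autoref{prop:Xassoc}. The only difference is cosmetic: for \ref{it:n1} the paper cites the nearly identical proof of Proposition~3.2 in \cite{NOR18}, whereas you carry out the induction showing that the $n$-fold product of $X$ equals the monomial $1X^n$ for all $n\in\mathbb{Z}$ yourself.
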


\begin{proof}
$R[X^\pm;\sigma]$ is clearly a non-associative ring, and $R$ can be identified with a subring of $R[X^\pm;\sigma]$ that contains the multiplicative identity element. $X$ has a two-sided inverse, and so we only need to verify that the axioms \ref{it:n1}, \ref{it:n2}, and \ref{it:n3} hold. For the proof that \ref{it:n1} holds, we refer the reader to the proof of \cite[Proposition 3.2]{NOR18} which is nearly identical. For \ref{it:n2}, we have $XR=\left(1X\right) \left(RX^0\right)=\sigma(R)X\subseteq RX$ and $RX=\sigma\left(\sigma^{-1}(R)\right)X$
 $=\left(1X\right) \left(\sigma^{-1}(R)X^0\right)=X\sigma^{-1}(R)\subseteq XR$. That \ref{it:n3} holds follows from Proposition \ref{prop:Xassoc}.\qedhere
\end{proof}

\begin{proposition}\label{prop:generalized-laurent}
Every non-associative skew Laurent polynomial ring of $R$ is isomorphic to a generalized Laurent polynomial ring $R[X^\pm;\sigma]$.
\end{proposition}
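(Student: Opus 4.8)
The plan is to show that any non-associative skew Laurent polynomial ring $S$ of $R$ is isomorphic to $R[X^\pm;\sigma]$ for a suitable additive bijection $\sigma$ respecting $1$, mirroring the argument given in the associative case and in \cite{NOR18} for Ore extensions. The first step is to extract $\sigma$ from the defining axioms. By axiom \ref{it:n2} we have $xR = Rx$, so for every $r \in R$ there is a unique element, which we call $\sigma(r) \in R$, with $xr = \sigma(r)x$; uniqueness follows from \ref{it:n1}, since $\{1,x,x^{-1},x^2,\ldots\}$ being a left $R$-module basis forces the coefficient of $x$ to be determined. I would then verify that $\sigma$ is additive (immediate from biadditivity of multiplication and uniqueness of coefficients) and that $\sigma(1)=1$ (from $x\cdot 1 = x = 1\cdot x$). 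Symmetrically, the inclusion $Rx \subseteq xR$ yields a map $\overline{\sigma}$ with $rx = x\overline{\sigma}(r)$, and I would check $\overline{\sigma} = \sigma^{-1}$, which establishes that $\sigma$ is a bijection: from $xr = \sigma(r)x$ one gets $r x = x \sigma^{-1}(r)$ after applying $\sigma^{-1}$ and using that $x$ may be cancelled via \ref{it:n3}.

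The second step is to define the candidate isomorphism $f\colon S \to R[X^\pm;\sigma]$. By \ref{it:n1}, every element of $S$ is uniquely $\sum_{i\in\mathbb{Z}} r_i x^i$ with $r_i \in R$ almost all zero, so I would set $f\left(\sum_i r_i x^i\right) = \sum_i r_i X^i$; this is a well-defined additive bijection by the uniqueness in the basis property, and clearly $f|_R = \id_R$ and $f(x) = X$. The remaining work is to prove $f$ is multiplicative, and here the crux is to first establish that the multiplication in $S$ satisfies the same product rule \eqref{eq:laurent-mult} that defines $R[X^\pm;\sigma]$. The key computation is to verify $x^m r = \sigma^m(r) x^m$ for all $m\in\mathbb{Z}$ and then $\left(r x^m\right)\left(s x^n\right) = \left(r\sigma^m(s)\right) x^{m+n}$ in $S$. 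This is exactly where axiom \ref{it:n3} does the heavy lifting: it guarantees that $x$ (and via \ref{it:lem:nuclei3} of \autoref{lem:nuclei}, also $x^{-1}$) lies in $N_m(S) \cap N_r(S)$, so the various reassociations needed to push powers of $x$ past elements of $R$ and to collapse $\left(rx^m\right)\left(sx^n\right)$ are all legitimate.

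Concretely, I would argue by induction that $x^m \in N_m(S)\cap N_r(S)$ for every $m\in\mathbb{Z}$: the generators $x, x^{-1}$ are handled by \ref{it:n3} and \autoref{lem:nuclei}, and since the nuclei are closed under products this extends to all integer powers. With $x^m$ nuclear, I would compute
\begin{equation*}
\left(rx^m\right)\left(sx^n\right) = \left(\left(rx^m\right)s\right)x^n = \left(r\left(x^m s\right)\right)x^n = \left(r\left(\sigma^m(s)x^m\right)\right)x^n = \left(\left(r\sigma^m(s)\right)x^m\right)x^n = \left(r\sigma^m(s)\right)x^{m+n},
\end{equation*}
where the first and third-to-last equalities use that $x^n, x^m \in N_r(S)$, the second uses $x^m \in N_m(S)$, and the relation $x^m s = \sigma^m(s) x^m$ is the iterated form of $xr=\sigma(r)x$. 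Since the product in $R[X^\pm;\sigma]$ is defined by exactly this formula, $f$ carries products to products on the basis monomials $rx^m$, and biadditivity extends multiplicativity to all of $S$. Thus $f$ is a ring isomorphism.

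The main obstacle I anticipate is establishing the relation $x^m s = \sigma^m(s) x^m$ for all integers $m$, including the negative ones, cleanly: the positive case follows by a routine induction from $xr=\sigma(r)x$, but for negative $m$ one must carefully use the invertibility of $x$ together with the nuclearity from \autoref{lem:nuclei} to justify $x^{-1} s = \sigma^{-1}(s) x^{-1}$, and this is precisely where the non-associative setting differs from the classical one and where the earlier lemma becomes indispensable.
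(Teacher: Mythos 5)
Your skeleton matches the paper's proof (extract $\sigma$ and $\overline{\sigma}$ from \ref{it:n1} and \ref{it:n2}, establish nuclearity of the powers of $x$, derive the product formula $\left(rx^m\right)\left(sx^n\right)=\left(r\sigma^m(s)\right)x^{m+n}$, and transport it through the obvious additive bijection $f$), but there is a genuine gap at the step you yourself flag as the crux. You claim that the base case of your induction, namely $x^{-1}\in N_m(S)\cap N_r(S)$, is ``handled by \ref{it:n3} and \autoref{lem:nuclei}.'' It is not: axiom \ref{it:n3} gives $x\in N_m(S)\cap N_r(S)$, so part \ref{it:lem:nuclei3} of \autoref{lem:nuclei} yields $x^{-1}\in N_r(S)$, but no part of the lemma yields $x^{-1}\in N_m(S)$. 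The relevant part \ref{it:lem:nuclei2} would require $x\in N(S)$, i.e. additionally $x\in N_l(S)$, which \ref{it:n3} does not grant --- and which by \ref{it:x-left-nucleus} of \autoref{prop:S-associativity} fails in general unless $\sigma$ is an automorphism, whereas here $\sigma$ is merely an additive bijection. Since your central five-step computation reassociates $\left(rx^m\right)s=r\left(x^ms\right)$ for negative $m$, it rests on exactly this unproven membership, so the proof as written is incomplete. (A smaller slip of the same kind: you obtain $rx=x\sigma^{-1}(r)$ ``after applying $\sigma^{-1}$,'' which presupposes the bijectivity of $\sigma$ that is being proven; the paper instead shows $\sigma\circ\overline{\sigma}=\overline{\sigma}\circ\sigma=\id_R$ directly from \ref{it:n1}, using $x\in N_m(S)$ to cancel $x$ in $rx=x\overline{\sigma}(\sigma(r))$.)

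The paper fills the gap with an argument you would need to reproduce. First it derives $x^{-1}r=\sigma^{-1}(r)x^{-1}$ using only $x\in N_m(S)\cap N_r(S)$; then it proves the intermediate identity $x\left(x^{-1}u\right)=u$ for \emph{all} $u\in S$ by expanding $u=\sum_{i\in\mathbb{Z}}u_ix^i$ in the basis from \ref{it:n1} and pushing $x^{-1}$ through term by term (this uses $x^{-1}\in N_r(S)$, already secured); from this it deduces $t\left(x^{-1}u\right)=\left(t\left(x^{-1}x\right)\right)\left(x^{-1}u\right)=\left(\left(tx^{-1}\right)x\right)\left(x^{-1}u\right)=\left(tx^{-1}\right)\left(x\left(x^{-1}u\right)\right)=\left(tx^{-1}\right)u$ for all $t,u\in S$, i.e. $x^{-1}\in N_m(S)$. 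Only then does closure of the nuclei under multiplication give $x^n\in N_m(S)\cap N_r(S)$ for all $n\in\mathbb{Z}$, which is the point where your argument resumes. With that supplied, your direct derivation of the product formula is actually somewhat cleaner than the paper's two-sided induction on $m$, so the repair is local: insert the $x\left(x^{-1}u\right)=u$ argument and the rest of what you wrote goes through.
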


\begin{proof}
The proof is similar to the proof of \cite[Proposition 3.3]{NOR18}. However, computations are a bit more involved, and hence we provide it here for the convenience of the reader. 

Let $R$ be a non-associative ring, $S$ a skew Laurent polynomial ring of $R$ defined by $x$, and $r,s\in R$. Then, from \ref{it:n1} and \ref{it:n2}, $xr=\sigma(r)x$ for some unique coefficient $\sigma(r)\in R$. Moreover, from \ref{it:n2}, $rx=x\overline{\sigma}(r)$ for some $\overline{\sigma}(r)\in R$, and since $x\in N_m(S)$ from \ref{it:n3}, $x^{-1}(rx)=x^{-1}(x\overline{\sigma}(r))=(x^{-1}x)\overline{\sigma}(r)=1\overline{\sigma}(r)=\overline{\sigma}(r)1$. Using \ref{it:n1}, $\overline{\sigma}(r)$ is then unique, and hence $\sigma$ and $\overline{\sigma}$ define functions $\sigma\colon R\to R$ and $\overline{\sigma}\colon R\to R$. We have $rx=x\overline{\sigma}(r)=\sigma(\overline{\sigma}(r))x$, and so by \ref{it:n1}, $\sigma\circ \overline{\sigma}=\mathrm{id}_R$. We also have that $xr=\sigma(r)x=x\overline{\sigma}(\sigma(r))$. Using that $x\in N_m(S)$, $r=(x^{-1}x)r=x^{-1}(xr)=x^{-1}(x\overline{\sigma}(\sigma(r)))=(x^{-1}x)\overline{\sigma}(\sigma(r))=\overline{\sigma}(\sigma(r))$. Hence $\overline{\sigma}\circ\sigma=\mathrm{id}_R$, and so we can conclude that $\sigma$ is bijective with $\sigma^{-1}=\overline{\sigma}$. Since $rx=x\sigma^{-1}(r)$, we have $(x^{-1}r)x=x^{-1}(rx)=x^{-1}(x\sigma^{-1}(r))=(x^{-1}x)\sigma^{-1}(r)=\sigma^{-1}(r)$ and therefore $x^{-1}r=(x^{-1}r)(xx^{-1})=((x^{-1}r)x)x^{-1}=\sigma^{-1}(r)x^{-1}$. Now, on the one hand $x(r+s)=\sigma(r+s)x$, and on the other hand, $x(r+s)=xr+xs=\sigma(r)x+\sigma(s)x$ by distributivity. Hence, by \ref{it:n1}, $\sigma$ is additive. Since the multiplicative identity element in $R$ is the multiplicative identity element $1$ in $S$ by assumption, $1x=x1=\sigma(1)x$, and so by \ref{it:n1}, $\sigma(1)=1$. 

We claim that $\left(rx^m\right)\left(sx^n\right)=\left(r\sigma^m(s)\right)x^{m+n}$ for any $m,n\in\mathbb{Z}$. To prove this, as an intermediate step, let us first show that $x(x^{-1}u)=u$ for any $u\in S$. Indeed, by \ref{it:n1}, we may set  $u=\sum_{i\in\mathbb{Z}}u_ix^i$ for some $u_i\in R$. Then $x(x^{-1}u)=x\left(x^{-1}\left(\sum_{i\in\mathbb{Z}}u_ix^i\right)\right)=x\left(\sum_{i\in\mathbb{Z}}x^{-1}(u_ix^i)\right)=x\left(\sum_{i\in\mathbb{Z}}(x^{-1}u_i)x^i\right)$\\
\noindent$=x\left(\sum_{i\in\mathbb{Z}}(\sigma^{-1}(u_i)x^{-1})x^i\right)=x\left(\sum_{i\in\mathbb{Z}}\sigma^{-1}(u_i)(x^{-1}x^i)\right)=x\left(\sum_{i\in\mathbb{Z}}\sigma^{-1}(u_i)x^{i-1}\right)=\sum_{i\in\mathbb{Z}}x(\sigma^{-1}(u_i)x^{i-1})=\sum_{i\in\mathbb{Z}}(x\sigma^{-1}(u_i))x^{i-1}=\sum_{i\in\mathbb{Z}}(u_ix)x^{i-1}=\sum_{i\in\mathbb{Z}}u_ix^{i}=u$. Since $x\in N_m(S)\cap N_r(S)$ by assumption, from \ref{it:lem:nuclei3} in \autoref{lem:nuclei} it now follows that $x^{-1}\in N_m(S)\cap N_r(S)$. Recall from \autoref{subsec:non-assoc-ring-theory} that $N_m(S)$ and $N_r(S)$ are rings, and hence they are closed under multiplication. Therefore $x^n\in N_m(S)\cap N_r(S)$ for any $n\in\mathbb{Z}$. 

Now, let us return to the proof of the equality $\left(rx^m\right)\left(sx^n\right)=\left(r\sigma^m(s)\right)x^{m+n}$ for any $m,n\in\mathbb{Z}$. First, we prove by induction on $m$ that $x^ms=\sigma^m(s)x^m$. The base case $m=0$ is immediate. We now split the induction step into two cases. First, assume that $m\leq0$ and set $p=-m$. We have $x^{-(p+1)}s=x^{m-1}s=(x^mx^{-1})s=x^m(x^{-1}s)=x^m(\sigma^{-1}(s)x^{-1})=(x^m\sigma^{-1}(s))x^{-1}=(\sigma^{m}(\sigma^{-1}(s))x^{m})x^{-1}\\=\sigma^{m-1}(s)x^{m-1}=\sigma^{-(p+1)}(s)x^{-(p+1)}$, which completes the negative part of the induction step. Now assume that $m\geq0$. Then $x^{m+1}s=(x^mx)s=x^m(xs)=x^m(\sigma(s)x)=(x^m\sigma(s))x=(\sigma^m(\sigma(s))x^m)x=\sigma^{m+1}(s)x^{m+1}$, which completes the positive part of the induction step. Since $x^m,x^n\in N_m(S)\cap N_r(S)$, we get $(rx^m)(sx^n)=r(x^m(sx^n))=r((x^ms)x^n)=r((\sigma^m(s)x^m)x^n)=r(\sigma^m(s)(x^mx^n))=r(\sigma^m(s)x^{m+n})=(r\sigma^m(s))x^{m+n}$. 

Last, define a function $f\colon S\to R[X^\pm;\sigma]$ by the additive extension of the relations $f(rx^m)=rX^m$ for any $r\in R$ and $m\in\mathbb{Z}$. Then $f$ is an isomorphism of additive groups, and moreover, for any $r,s\in R$ and $m,n\in\mathbb{Z}$, $f\left(\left(rx^m\right)\left(sx^n\right)\right)=f\left(\left(r\sigma^m(s)\right)x^{m+n}\right)=\left(r\sigma^m(s)\right)X^{m+n}=\left(rX^m\right)\left(sX^n\right)=f\left(rx^m\right)f\left(sx^n\right)$.
\end{proof}

\begin{proposition}\label{prop:S-associativity}
Let $R$ be a non-associative ring with an additive bijection $\sigma$ that respects $1$. If $S=R[X^\pm;\sigma]$, then
\begin{enumerate}[label=(\roman*)]
	\item $R\subseteq N_l(S)$ if and only if $R$ is associative;\label{it:R-associative}
	\item $X\in N_l(S)$ if and only if $\sigma$ is an automorphism;\label{it:x-left-nucleus}
	\item $S$ is associative if and only if $R$ is associative and $\sigma$ is an automorphism.\label{it:S-associative}
\end{enumerate}
\end{proposition}

\begin{proof}
\ref{it:R-associative}: This follows from \cite[Proposition 7]{NOR19}.

\ref{it:x-left-nucleus}: By \cite[Proposition 7]{NOR19}, we know that $(X,S,S)\subseteq (X,R,R)S$. So it is enough to prove that $(X,R,R)= \{ 0\}$ if and only if $\sigma$ is an automorphism. However, if $r,s \in R$, then the condition $X(rs)=(Xr)s$ is clearly equivalent to $\sigma(rs) = \sigma(r)\sigma(s)$.

\ref{it:S-associative}: The conditions are clearly necessary. Conversely, if they are satisfied then also $X^{-1} \in N(S)$, so $S$ is generated by elements that belong to $N(S)$ and must be associative. 
\end{proof}

\begin{example}\label{ex:complex-skew-laurent}
	On the ring $\mathbb{C}$ we can define $\sigma_q(a+bi)= a+qbi$ for any $a,b\in \mathbb{R}$ and $q\in\mathbb{R}^\times$. Then $\sigma_q$ is an additive bijection that respects $1$, and  we can accordingly define $\mathbb{C}[X^{\pm};\sigma_q]$. Moreover, $\sigma_q$ is a ring automorphism if and only if $q=\pm 1$, and so by \ref{it:S-associative} in \autoref{prop:S-associativity}, $\mathbb{C}[X^{\pm};\sigma_q]$ is associative if and only if $q=\pm 1$.
\end{example}

\begin{example}\label{ex:quantum-torus}   
Let $T$ be a non-associative ring and let $q\in Z(T)^\times$. Define a ring automorphism $\sigma_q\colon T[Y^\pm]\to T[Y^\pm]$ by the $T$-algebra extension of the relation $\sigma_q(Y)=qY$. The \emph{non-associative quantum torus} over $T$ is the generalized Laurent polynomial ring $T[Y^\pm][X^\pm;\sigma_q]$. By \ref{it:S-associative} in \autoref{prop:S-associativity}, $T[Y^\pm][X^\pm;\sigma_q]$ is associative if and only if $T$ is associative.
\end{example}

\begin{example}\label{ex:formal-series}
Let $f\colon\mathbb{N}\to\mathbb{N}$ be any bijection with $f(0)=0$. Suppose $K$ is a field and put $R=K[[Y]]$. Let $\sigma$ be the additive bijection on $R$ defined by the $K$-linear and continuous (in the usual topology of formal power series rings) extension of the relations $\sigma(Y^n)=Y^{f(n)}$, if $n\neq1$, and $\sigma(Y)=Y^{f(1)}+1$. Note that $\sigma$ respects 1 and is well defined since $\lim_{n \to \infty} f(n) = \infty$. However, $\sigma$ is not a ring homomorphism since $\sigma(Y^2)=Y^{f(2)}$ but $\sigma(Y)^2=(Y^{f(1)}+1)^2=Y^{2f(1)}+2Y^{f(1)}+1\neq Y^{f(2)}$. By \ref{it:S-associative} in \autoref{prop:S-associativity}, $R[X^\pm;\sigma]$ is not associative. Later (see \ref{it:formal-series} in \autoref{cor:examples-simplicity}) we will use a specific choice of bijection denoted by $g$ and defined by
\begin{equation*}
g(n)=\begin{cases}0\phantom{^k}\text{ if } n=0,\\
	2^k\text{ if } n=2^{k+1}\text{ for some } k\in\mathbb{N},\\
	\min\{j\in\mathbb{N}\colon j>n \text{ and } j \text{ is not a power of } 2\}\text{ otherwise}.
\end{cases}
\end{equation*}
Hence $g(1)=3$, $g(2)=1$, $g(3)=5$, and so on. We denote the corresponding additive bijection by $\sigma_g$.
\end{example} 

Recall that a \emph{ring anti-automorphism} $\sigma$ on a non-associative ring $R$ is an additive bijection such that for any $r,s\in R$, $\sigma(rs)=\sigma(s)\sigma(r)$. In particular, $\sigma(1)\sigma(r)=\sigma(r1)=\sigma(r)=\sigma(1r)=\sigma(r)\sigma(1)$, so by the uniqueness of $1$, $\sigma(1)=1$. Hence any ring anti-automorphism $\sigma$ on $R$ naturally gives rise to a generalized Laurent polynomial ring $R[X^\pm;\sigma]$.

\begin{lemma}\label{lem:anti-automorphism}
Let $R$ be a non-associative ring with a ring anti-automorphism $\sigma$. Then $R[X^\pm;\sigma]$ is associative if and only if $R$ is associative and commutative.
\end{lemma}

\begin{proof}
By \ref{it:S-associative} in \autoref{prop:S-associativity}, $R[X^\pm;\sigma]$ is associative if and only if $R$ is associative and $\sigma$ is a ring automorphism. We claim that $\sigma$ is a ring automorphism if and only if $R$ is commutative. It is clear that $\sigma$ is a ring automorphism if $R$ is commutative. To prove the converse, assume that $\sigma$ is a ring automorphism. For any $r,s\in R$, $r=\sigma(r')$ and $s=\sigma(s')$ for some $r',s'\in R$. Since $\sigma$ is both a ring automorphism and a ring anti-automorphism, $rs=\sigma(r')\sigma(s')=\sigma(r's')=\sigma(s')\sigma(r')=sr$.  
\end{proof}

\begin{example}\label{ex:matrix-permutation}
Let $R$ be a non-associative ring and let $\sigma$ be any of the $n!(n^2-n)!$ maps on the non-associative matrix ring $M_n(R)$ defined by permuting diagonal and non-diagonal elements separately. Then $\sigma$ is an additive bijection that respects $1$, and so we can define $M_n(R)[X^\pm;\sigma]$. As a concrete example, one could e.g. take $\sigma$ to be matrix transpose, $\sigma_T$. Since $\sigma_T$ is an anti-automorphism, by \autoref{lem:anti-automorphism}, $M_n(R)[X^\pm;\sigma_T]$ is associative if and only if $n=1$ and $R$ is commutative, or if $R$ is the zero ring.
\end{example}

Let $K$ be an associative and commutative ring, and let $A$ be a non-associative $K$-algebra. Recall that an \emph{involution} of $A$ is a $K$-linear map $*\colon A\to A$, also written $a\mapsto a^*$, such that $(ab)^*=b^*a^*$ and $(a^*)^*=a$ hold for any $a,b\in A$. In particular, $*$ is an anti-automorphism. A non-associative algebra with an involution $*$ is referred to as a non-associative \emph{$*$-algebra} (for an introduction to $*$-algebras, see e.g. \cite[Section 2.2]{McC04}). If $A$ is a non-associative $*$-algebra, then $A$ naturally gives rise to a generalized Laurent polynomial ring $A[X^\pm;*]$, which by \autoref{lem:anti-automorphism} is associative if and only if $A$ is associative and commutative. 

\begin{example}\label{ex:matrix-conjugate-transpose}
$M_n(\mathbb{C})$ with $*$ given by conjugate transpose is an associative $*$-algebra over $\mathbb{R}$ which is commutative if and only if $n=1$. Hence, by \autoref{lem:anti-automorphism}, $M_n(\mathbb{C})[X^\pm;*]$ is associative if and only if $n=1$ (in which case we get the ring $\mathbb{C}[X^\pm;\sigma_q]$ with $q=-1$ in \autoref{ex:complex-skew-laurent}).
\end{example}

Starting from any non-associative $*$-algebra, the so-called Cayley--Dickson construction gives a new  non-associative $*$-algebra. In particular, by starting from the real numbers viewed as a real $*$-algebra with $*=\id_\mathbb{R}$ and then repeatedly applying the Cayley--Dickson construction, we get the following real, non-associative $*$-algebras where $*$ is given by conjugation: the complex numbers, the quaternions ($\mathbb{H}$), the octonions ($\mathbb{O}$), and so on. For more details on this construction, see e.g. \cite{Bae02}. 

\begin{example}\label{ex:cayley-staralgebra}
Let $A$ be any of the real, non-associative $*$-algebras $\mathbb{R}, \mathbb{C}, \mathbb{H}, \ldots$ with $*$ given by conjugation. Then $A$ is commutative if and only if $A=\mathbb{R}$ or $\mathbb{C}$. By \autoref{lem:anti-automorphism}, $A[X^\pm;*]$ is associative if and only if $A=\mathbb{R}$ or $\mathbb{C}$. 	
\end{example}

\section{Simplicity}\label{sec:simplicity}
In this section, we examine when generalized Laurent polynomial rings are simple. If $R$ is a non-associative ring with additive bijections $\sigma_1,\ldots,\sigma_n$ that respect $1$, then, just as in the case of automorphisms in \autoref{subsec:graded-ring-theory}, an ideal $I$ of $R$ is said to be a \emph{$(\sigma_1,\ldots,\sigma_n)$-ideal} if $\sigma_i(I)=I$ holds for all $i\in\{1,\ldots,n\}$. The ring $R$ is called \emph{$\sigma$-simple} if $\{0\}$ and $I$ are the only $(\sigma_1,\ldots,\sigma_n)$-ideals of $R$. Also recall from \autoref{subsec:graded-ring-theory} that an ideal $I$ of a $G$-graded ring $R=\bigoplus_{g\in G} R_g$ is called graded if $I=\bigoplus_{g\in G} I\cap R_g$, and that $R$ is said to be graded simple if the only graded ideals of $R$ are $\{0\}$ and $R$. 

With $R[X^\pm;\sigma]$ viewed as a $\mathbb{Z}$-graded ring, we have the following result:

\begin{proposition}\label{prop:gradedsimplicity}
    Let $R$ be a non-associative ring with an additive bijection $\sigma$ that respects $1$. Then $R[X^\pm;\sigma]$ is graded simple if and only if $R$ is $\sigma$-simple.
\end{proposition}

\begin{proof}
    If $I$ is a non-trival $\sigma$-ideal of $R$, then the elements in $S\colonequals R[X^\pm;\sigma]$ with coefficients from $I$ form a proper, non-zero graded ideal of $S$. 

    Conversely, suppose that $R$ is $\sigma$-simple. If $I$ is a non-zero graded ideal of $S$, then it has non-zero intersection with some homogeneous component. The coefficients of that intersection is a non-zero $\sigma$-ideal of $R$, so $I$ contains $X^n$ for some $n\in\mathbb{Z}$. However, then $I$ contains $1$, and we have $I=S$. 
\end{proof}

\begin{proposition}\label{prop:center-laurent}
Let $R$ be a non-associative ring with an additive bijection $\sigma$ that respects $1$. Then $Z(R[X^\pm;\sigma])$ equals the set of elements of the form $\sum_{i\in\mathbb{Z}}r_iX^i$, $r_i\in N_m(R)\cap N_r(R)$, such that for all $r\in R$ and $i\in\mathbb{Z}$, the following equalities hold:
\begin{enumerate}[label=(\roman*)]
\item $r_i\sigma^i(r)=rr_i$;
\item $\sigma(rr_i)=\sigma(r)r_i$.
\end{enumerate}
\end{proposition}

\begin{proof}
Let $S=R[X^\pm;\sigma]$ and $p=\sum_{i\in\mathbb{Z}}r_iX^i\in Z(S)$. For any $r\in R$,
\begin{align*}
0&=\left[\textstyle\sum_{i\in\mathbb{Z}}r_iX^i,r\right]=\textstyle\sum_{i\in\mathbb{Z}}\left[r_iX^i,r\right]=\sum_{i\in\mathbb{Z}}\left(r_iX^i\right)r-r\left(r_iX^i\right)\\
&=\textstyle\sum_{i\in\mathbb{Z}}\left(r_i\sigma^i(r)-rr_i\right)X^i,
\end{align*}
so by comparing coefficients, 
\begin{equation}
	r_i\sigma^i(r)=rr_i,\quad\text{for any } i\in\mathbb{Z}.\label{eq:pi-commute}
\end{equation}
Moreover,
\begin{align*}
0&=\left[\textstyle\sum_{i\in\mathbb{Z}}r_iX^i,X\right]=\textstyle\sum_{i\in\mathbb{Z}}\left[r_iX^i,X\right]=\textstyle\sum_{i\in\mathbb{Z}}\left(r_iX^{i+1}-\sigma(r_i)X^{i+1}\right)\\
&=\textstyle\sum_{i\in\mathbb{Z}}(r_i-\sigma(r_i))X^{i+1},
\end{align*}
so $\sigma(r_i)=r_i$ for any $i\in\mathbb{Z}$. Also,
\begin{align*}
0&=\left(X,r,\textstyle\sum_{i\in\mathbb{Z}}r_iX^i\right)=\textstyle\sum_{i\in\mathbb{Z}}\left(X,r,r_iX^i\right)=\textstyle\sum_{i\in\mathbb{Z}}(Xr)\left(r_iX^i\right)-X\left(r\left(r_iX^i\right)\right)\\
&=\textstyle\sum_{i\in\mathbb{Z}}(\sigma(r)X)\left(r_iX^i\right)-\sigma(rr_i)X^{i+1}=\textstyle\sum_{i\in\mathbb{Z}}(\sigma(r)\sigma(r_i)-\sigma(rr_i))X^{i+1},
\end{align*}
so $\sigma(rr_i)=\sigma(r)\sigma(r_i)$ for any $i\in\mathbb{Z}$. Since $\sigma(r_i)=r_i$, we have
\begin{equation}
\sigma(rr_i)=\sigma(r)r_i,\quad\text{for any } i\in\mathbb{Z}.\label{eq:pi-automorphism}
\end{equation}
Last, for any $r,s\in R$,
\begin{align*}
0&=\left(r,\textstyle\sum_{i\in\mathbb{Z}}r_iX^i,s\right)=\textstyle\sum_{i\in\mathbb{Z}}\left(r,r_iX^i,s\right)=\textstyle\sum_{i\in\mathbb{Z}}\left(r\left(r_iX^i\right)\right)s-r\left(\left(r_iX^i\right)s\right)\\
&=\textstyle\sum_{i\in\mathbb{Z}}((rr_i)\sigma^i(s)-r(r_i\sigma^i(s)))X^i,\\
0&=(r,s,\textstyle\sum_{i\in\mathbb{Z}}r_iX^i)=\textstyle\sum_{i\in\mathbb{Z}}\left(r,s,r_iX^i\right)=\textstyle\sum_{i\in\mathbb{Z}}(rs)\left(r_iX^i\right)-r\left(s\left(r_iX^i\right)\right)\\
&=\textstyle\sum_{i\in\mathbb{Z}}((rs)r_i-r(sr_i))X^i,
\end{align*}
and since $\sigma$ is surjective, so is $\sigma^i$, and hence 
\begin{equation}
r_i\in N_m(R)\cap N_r(R).\label{eq:pi-nuclei}
\end{equation}
We now prove that the conditions \eqref{eq:pi-commute}--\eqref{eq:pi-nuclei} are also sufficient. First, we claim that \eqref{eq:pi-automorphism} is equivalent to
\begin{equation}
\sigma^j(rr_i)=\sigma^j(r)r_i,\quad\text{for any } i,j\in\mathbb{Z}.	\label{eq:pi-j-automorphism}
\end{equation}
By letting $j=1$, we see that \eqref{eq:pi-j-automorphism} implies \eqref{eq:pi-automorphism}. We show the opposite implication by induction on $j$. The base case $j=0$ follows from the definition. We now split the induction step into two cases. First, assume that $j\geq 0$ and that the induction hypothesis and \eqref{eq:pi-automorphism} hold. Then $\sigma^{j+1}(rr_i)=\sigma\left(\sigma^j(rr_i)\right)=\sigma\left(\sigma^j(r)r_i\right)=\sigma\left(\sigma^j(r)\right)r_i=\sigma^{j+1}(r)r_i$. Now assume that $j\leq 0$, that the induction hypothesis and \eqref{eq:pi-automorphism} hold, and set $p=-j$. From \eqref{eq:pi-automorphism}, $rr_i=\sigma^{-1}(\sigma(r)r_i)$, so $\sigma^{-1}(r)r_i=\sigma^{-1}\left(\sigma\left(\sigma^{-1}(r)\right)r_i\right)=\sigma^{-1}(rr_i)$. Hence
$\sigma^{-(p+1)}(rr_i)=\sigma^{j-1}(rr_i)=\sigma^{-1}\left(\sigma^j(rr_i)\right)=\sigma^{-1}\left(\sigma^j(r)r_i\right)=\sigma^{-1}\left(\sigma^j(r)\right)r_i=\sigma^{j-1}(r)r_i=\sigma^{-(p+1)}(r)r_i$. Moreover, by letting $r=1$ in \eqref{eq:pi-j-automorphism}, 
\begin{equation}
\sigma^j(r_i)=r_i,\quad\text{for any } i\in\mathbb{Z}.\label{eq:pi-sigma-invariant}
\end{equation}
Now assume that the conditions \eqref{eq:pi-commute}--\eqref{eq:pi-nuclei} hold. As noted above, then \eqref{eq:pi-j-automorphism} and \eqref{eq:pi-sigma-invariant} also hold. We wish to show that $\sum_{i\in\mathbb{Z}}r_iX^i\in Z(S)$. We note that it is sufficient to show that $r_iX^i\in Z(S)$ for any $i\in\mathbb{Z}$. By \ref{it:commuter-middle-right} in \autoref{prop:center},  $Z(S)=C(S)\cap N_m(S)\cap N_r(S)$, so it is sufficient to show that $\left[r_iX^i,s_jX^j\right]=\left(s_jX^j,t_kX^k,r_iX^i\right)=\left(s_jX^j,r_iX^i,t_kX^k\right)=0$ hold for any $s_j,t_k\in R$ and $i,j,k\in\mathbb{Z}$. We have
\begin{align*}
\left[r_iX^i,s_jX^j\right]&=\left(r_iX^i\right)\left(s_jX^j\right)-\left(s_jX^j\right)\left(r_iX^i\right)=\left(r_i\sigma^i(s_j)-s_j\sigma^j(r_i)\right)X^{i+j}\\
&\stackrel{\eqref{eq:pi-commute}}{=}(s_jr_i-s_jr_i)X^{i+j}=0.
\end{align*}
Moreover, for any $r,s,t\in R$ and $i,j,k\in\mathbb{Z}$, 
\begin{align}\label{eq:associator}
\left(rX^i,sX^j,tX^k\right)&=\left(\left(rX^i\right)\left(sX^j\right)\right)\left(tX^k\right)-\left(rX^i\right)\left(\left(sX^j\right)\left(tX^k\right)\right)\\
&=\left(\left(r\sigma^i(s)\right)X^{i+j}\right)\left(tX^k\right)-\left(rX^i\right)\left(\left(s\sigma^j(t)\right)X^{j+k}\right)\nonumber\\
&=\left(\left(r\sigma^i(s)\right)\sigma^{i+j}(t)-r\sigma^i\left(s\sigma^j(t)\right)\right)X^{i+j+k},\nonumber\\
\left(s_jX^j,t_kX^k,r_iX^i\right)&\stackrel{\eqref{eq:associator}}{=}\left(\left(s_j\sigma^j(t_k)\right)\sigma^{j+k}(r_i)-s_j\sigma^j\left(t_k\sigma^k(r_i)\right)\right)X^{i+j+k}\nonumber\\
&\stackrel{\eqref{eq:pi-sigma-invariant}}{=}\left(\left(s_j\sigma^j(t_k)\right)r_i-s_j\sigma^j(t_kr_i)\right)X^{i+j+k}\nonumber\\
&\stackrel{\eqref{eq:pi-j-automorphism}}{=}\left(\left(s_j\sigma^j(t_k)\right)r_i-s_j\left(\sigma^j(t_k)r_i\right)\right)X^{i+j+k}\nonumber\\
&\stackrel{\eqref{eq:pi-nuclei}}{=}\left(\left(s_j\sigma^j(t_k)\right)r_i-\left(s_j\sigma^j(t_k)\right)r_i\right)X^{i+j+k}=0,\nonumber\\
\left(s_jX^j,r_iX^i,t_kX^k\right)&\stackrel{\eqref{eq:associator}}{=}\left(\left(s_j\sigma^j(r_i)\right)\sigma^{i+j}(t_k)-s_j\sigma^j\left(r_i\sigma^i(t_k)\right)\right)X^{i+j+k}\nonumber\\
&\stackrel{\eqref{eq:pi-sigma-invariant}}{=}\left((s_jr_i)\sigma^{i+j}(t_k)-s_j\sigma^j\left(r_i\sigma^i(t_k)\right)\right)X^{i+j+k}\nonumber\\
&\stackrel{\eqref{eq:pi-commute}}{=}\left((s_jr_i)\sigma^{i+j}(t_k)-s_j\sigma^j(t_kr_i)\right)X^{i+j+k}\nonumber\\
&\stackrel{\eqref{eq:pi-j-automorphism}}{=}((s_jr_i)\sigma^{i+j}(t_k)-s_j\left(\sigma^j(t_k)r_i)\right)X^{i+j+k}\nonumber\\
&\stackrel{\eqref{eq:pi-commute}}{=}\left((s_jr_i)\sigma^{i+j}(t_k)-s_j\left(r_i\sigma^{i+j}(t_k)\right)\right)X^{i+j+k}\nonumber\\
&\stackrel{\eqref{eq:pi-nuclei}}{=}\left((s_jr_i)\sigma^{i+j}(t_k)-(s_jr_i)\sigma^{i+j}(t_k)\right)X^{i+j+k}=0.\qedhere
\end{align}
\end{proof}

Using \autoref{prop:center-laurent}, the following result is immediate:
\begin{corollary}\label{cor:center}
Let $R$ be a non-associative ring with an additive bijection $\sigma$ that respects $1$. Then $Z(R[X^\pm;\sigma])\subseteq R$ holds if and only if there do not exist a non-zero $s\in N_m(R)\cap N_r(R)$ and a non-zero $n\in\mathbb{Z}$, such that for all $r\in R$, the following equalities hold:
\begin{enumerate}[label=(\roman*)]
\item $s\sigma^n(r)=rs$;\label{it:cor:center1}
\item $\sigma(rs)=\sigma(r)s$.\label{it:cor:center2}
\end{enumerate}	
\end{corollary}

\begin{theorem}\label{thm:simplicity}
Let $R$ be a non-associative ring with an additive bijection $\sigma$ that respects $1$. Then $R[X^\pm;\sigma]$ is simple if and only if $R$ is $\sigma$-simple and there do not exist $u\in N_m(R)\cap N_r(R)\cap R^\times$ and a non-zero $n\in\mathbb{Z}$, such that for all $r\in R$, the following equalities hold:
\begin{enumerate}[label=(\roman*)]
\item $\sigma^n(r)=u^{-1}ru$;\label{it:thm:sigma-n}
\item $\sigma(ru)=\sigma(r)u$.\label{it:thm:sigma}
\end{enumerate}	
\end{theorem}

\begin{proof}
   We note that $S\colonequals R[X^\pm;\sigma]$ is a faithfully $\mathbb{Z}$-graded ring with $\Supp(S)=\mathbb{Z}$, and that $\mathbb{Z}$ is a torsion-free hypercentral group. Hence, by \autoref{cor:NO}, \autoref{cor:center}, and \autoref{prop:gradedsimplicity}, we need only show that if $R$ is $\sigma$-simple and there is an $s \in R$ satisfying the conditions in Corollary \ref{cor:center}, then $s$ belongs to $R^\times$. To show this, consider the ideal $I$ of $R$ generated by $s$. Since $s\in N_m(R)$, we have $I=RsR$. By using \ref{it:cor:center1} in \autoref{cor:center}, we see that $I=Rs$. Hence, by \ref{it:cor:center2} in \autoref{cor:center}, we have $\sigma(I)=I$. Since $R$ is $\sigma$-simple and $I$ is non-zero, it follows that $I=R$, and so there exist $t\in R$ such that $ts=1$. This implies that $s\sigma^n(t)=1$ by \ref{it:cor:center1} in \autoref{cor:center} and that $\sigma^n(t)s=1$ by \ref{it:cor:center2} in \autoref{cor:center}, so $s\in R^\times$. 
   \end{proof}

By using the above theorem, we may deduce when the examples in \autoref{sec:non-assoc-laurent} are simple, and when they are not.

\begin{corollary}\label{cor:examples-simplicity}
The following assertions hold:
\begin{enumerate}[label=(\roman*)]
	\item $\mathbb{C}[X^\pm;\sigma_q]$ in \autoref{ex:complex-skew-laurent} is simple if and only if $q\neq\pm1$;\label{it:ex1}
	\item If $T$ is simple, then the non-associative quantum torus $T[Y^\pm][X^\pm;\sigma_q]$ in \autoref{ex:quantum-torus} is simple if and only if $q$ is not a root of unity;\label{it:ex2}
    \item $K[[Y]][X^\pm;\sigma_g]$ in \autoref{ex:formal-series} is simple;\label{it:formal-series}
	\item $M_n(R)[X^\pm;\sigma]$ in \autoref{ex:matrix-permutation}, $M_n(\mathbb{C})[X^\pm;*]$ in \autoref{ex:matrix-conjugate-transpose}, and $A[X^\pm;*]$ in \autoref{ex:cayley-staralgebra} are not simple.\label{it:ex3}
\end{enumerate}	
\end{corollary}

\begin{proof}
\ref{it:ex1}: Since $\mathbb{C}$ is simple, it is also $\sigma_q$-simple. If $q=\pm 1$, then $\sigma_q^2=\id_\mathbb{C}$, so \ref{it:thm:sigma-n} and \ref{it:thm:sigma} in \autoref{thm:simplicity} hold with $u=1$. Hence $\mathbb{C}[X^\pm;\sigma_q]$ is not simple. If $q\neq \pm 1$, then $\sigma_q^n(r)\neq r=u^{-1}ur=u^{-1}ru$ for any $r\in \mathbb{C}$ and $u\in \mathbb{C}^\times$. By \autoref{thm:simplicity}, $\mathbb{C}[X^\pm;\sigma_q]$ is simple.\\

\noindent\ref{it:ex2}: The proof is an adaptation of that of \cite[Corollary 1.18]{GW04} to the non-associative setting. Let $R=T[Y^\pm]$ and $S=R[X^\pm;\sigma_q]$. If $q$ is a root of unity, say $q^n=1$ for some non-zero $n\in\mathbb{N}$, then $\sigma_q^n(Y)=q^nY=Y$ and $\sigma_q^n(Y^{-1})=q^{-n}Y^{-1}=Y^{-1}$, so $\sigma_q^n(r)=r$ for all $r\in R$. Hence \ref{it:thm:sigma-n} and \ref{it:thm:sigma} in \autoref{thm:simplicity} hold with $u=1$, so $S$ is not simple. Conversely, assume that $q$ is not a root of unity. Let $u\in N_m(R)\cap N_r(R)\cap R^\times$ and assume that $n\in\mathbb{Z}$ is non-zero. Then $\sigma_q^n(Y)=q^nY\neq Y=u^{-1}uY=u^{-1}Yu$, so \ref{it:thm:sigma-n} in \autoref{thm:simplicity} does not hold. We note that by \autoref{thm:simplicity}, $R=T[Y^\pm;\id_T]$ is not simple. We claim, however, that it is $\sigma_q$-simple. To this end, let $I$ be a non-zero $\sigma_q$-ideal of $R$. We must show that $I=R$. We observe that $I\cap T[Y]$ is non-zero, and so we can choose a non-zero $p\in I\cap T[Y]$ of minimal degree, say $p=t_mY^m+\cdots+t_0$ for some $m\in\mathbb{N}$ and $t_{m},\ldots,t_0\in T$ where $t_m\neq 0$. If we can show that $p=t_mY^m$, then we are done: $I\cap T$ is an ideal of $T$, and if $p=t_mY^m$, then $t_m=pY^{-m}\in I$, so $I\cap T$ is non-zero. Since $T$ is simple, we must have $I\cap T=T$. In particular, $1\in T=I\cap T\subseteq I$, so $I=R$.  If $m=0$, then clearly $p=t_mY^m$. Hence, assume that $m$ is positive. Since $I$ is a $\sigma_q$-ideal, we have $\sigma_q(p)\in I\cap T[Y]$. Now, $\sigma_q(p)=q^mt_mY^m+\cdots+t_0$, and so $\sigma_q(p)-q^mp$ is in $I\cap T[Y]$ and of degree at most $m-1$. By the minimality of $m$, we must have $\sigma_q(p)-q^mp=0$, from which it follows that $q^it_i=q^mt_i$, that is, $(q^{m-i}-1)t_i=0$, for any $i\in\{1,\ldots,m\}$. Since $q$ is not a root of unity, $q^{m-i}-1$ is non-zero and therefore an element of $Z(T)\backslash\{0\}$ for any $i\in\{1,\ldots, m-1\}$. By \autoref{prop:center-is-field}, $Z(T)$ is a field, so $q^{m-i}-1$ is invertible for any $i\in\{1,\ldots, m-1\}$. In particular, $(q^{m-i}-1)t_i=0$ implies $t_i=0$ for any $i\in\{1,\ldots, m-1\}$. Consequently, $p=t_mY^m$. \\

\noindent\ref{it:formal-series} Let $R=K[[Y]]$ where $K$ is a field. Then $R$ is not simple, but we claim it is $\sigma_g$-simple. To show this, let $I$ be a non-zero  ideal of $R$. It is well known that $I$ is generated by some element $Y^m$ where $m\in\mathbb{N}$. Hence $I$ contains an element $Y^k$, where $k$ is a power of $2$. However, then it follows that if $I$ is $\sigma_g$-invariant, then it must contain $Y$, and hence also $Y^3$. Then $1=Y^3+1-Y^3=\sigma_g(Y)-Y^3\in I$, so $I=R$.

Clearly $\sigma_g^n(Y) \neq Y = u^{-1}uY=u^{-1}Yu$ for any $u \in R^\times$ and non-zero $n\in\mathbb{N}$. By \autoref{thm:simplicity}, $R[X^\pm;\sigma]$ is simple.\\

\noindent\ref{it:ex3}: For $M_n(R)[X^\pm;\sigma]$ in \autoref{ex:matrix-permutation}, $\sigma$ may be viewed as a permutation on a finite set, and since any permutation on a finite set has finite order, there is some non-zero $n\in\mathbb{Z}$ such that $\sigma^n=\id_{M_n(R)}$. Moreover, for any involution $*$ on a non-associative ring $R$, we have $*^2=\id_R$. Hence we can conclude that for all three examples, \ref{it:thm:sigma-n} and \ref{it:thm:sigma} in \autoref{thm:simplicity} hold with $u=1$, so the corresponding generalized Laurent polynomial rings are not simple.
\end{proof}

Let $R$ be a non-associative ring with pairwise commuting additive bijections $\sigma_1,\ldots,\sigma_n$ respecting $1$. Then we may construct an iterated generalized Laurent polynomial ring $R[X_1^\pm;\sigma_1]\cdots [X_n^\pm;\widehat{\sigma}_n]$, denoted by $R[X_1,\ldots,X_n;\sigma_1,\ldots,\sigma_n]$, in the same way as described in \autoref{subsec:skew-laurent-ore}. We note that this is a generalization of the non-associative skew Laurent polynomial rings introduced in \cite{NO20}; the construction in loc.~cit. corresponds exactly to the case when $\sigma_1,\ldots,\sigma_n$ are automorphisms. Moreover, we can now formulate a generalization of \autoref{thm:nystedt-oinert-several-simplicity}: 

\begin{theorem}\label{thm:several-simplicity}
   Let $R$ be a non-associative ring with pairwise commuting additive bijections $\sigma_1,\ldots,\sigma_n$ respecting $1$. Then $R[X_1^\pm,\ldots,X_n^\pm;\sigma_1,\ldots,\sigma_n]$ is simple if and only if $R$ is $(\sigma_1,\ldots,\sigma_n)$-simple and there do not exist $u\in N_m(R)\cap N_r(R)\cap R^\times$ and a non-zero $(m_1,\ldots,m_n)\in\mathbb{Z}^n$, such that for all $r\in R$ and $i\in\{1,\ldots,n\}$, the following equalities hold:
\begin{enumerate}[label=(\roman*)]
\item $(\sigma_1^{m_1}\circ \cdots \circ \sigma_n^{m_n})(r)=u^{-1}ru$;
\item $\sigma_i(ru)=\sigma_i(r)u$.
\end{enumerate}	
\end{theorem}

\begin{proof}
The proof follows the proof of \autoref{cor:center} and  \autoref{thm:simplicity} closely. First, we see that the arguments in \autoref{prop:center-laurent} can easily be adapted to the current case. If  $S=R[X_1^\pm,\ldots,X_n^\pm;\sigma_1,\ldots,\sigma_n]$, then it follows immediately that $Z(S)\subseteq R$ holds if and only if there do not exist a non-zero $s\in N_m(R)\cap N_r(R)$ and a non-zero $(m_1,\ldots,m_n)\in\mathbb{Z}^n$, such that for all $r\in R$ and $i\in\{1,\ldots,n\}$, the following equalities hold:
\begin{enumerate}[label=(\roman*)]
\item $s(\sigma_1^{m_1}\circ \cdots \circ \sigma_n^{m_n})(r)=rs$;
\item $\sigma_i(rs)=\sigma_i(r)s$.
\end{enumerate}	

That graded simplicity of $S$ is equivalent to $(\sigma_1,\ldots,\sigma_n)$-simplicty of $R$ is clear. 

The proof is then finished by an argument similar to the proof of \autoref{thm:simplicity}. 
\end{proof}

\section*{Acknowledgements}
We would like to thank the anonymous referee for valuable comments on the manuscript.

\end{document}